\documentclass{amsart}


\usepackage{amsmath,amsthm, amssymb,mathrsfs,  xypic}
\usepackage{stackengine,scalerel}
\usepackage[colorlinks]{hyperref}
\usepackage[capitalize]{cleveref}


\newcommand{\Hom}{\mathrm{Hom}}

\newcommand{\proet}{\mathrm{pro\acute{e}t}}
\newcommand{\Lie}{\mathrm{Lie}}

\newcommand{\Ext}{\mathrm{Ext}}

\newcommand{\GL}{\mathrm{GL}}

\newcommand{\et}{\mathrm{\acute{e}t}}

\newcommand{\crys}{\mathrm{crys}}

\newcommand{\SL}{\mathrm{SL}}

\newcommand{\mr}[1]{\mathrm{#1}}
\newcommand{\ul}[1]{\underline{#1}}
\newcommand{\mbb}[1]{\mathbb{#1}}

\theoremstyle{plain}


\newtheorem*{theorem*}{Theorem}
\newtheorem*{conjecture*}{Conjecture}
\newtheorem{theorem}{Theorem}

\newtheorem{lemma}{Lemma}

\theoremstyle{definition}

\newtheorem{remark}{Remark}
\newtheorem{question}{Question}

\crefformat{section}{\S#2#1#3}
\crefformat{Section}{\S#2#1#3}

\title[The de Jong fundamental groups of a non-triv. ab. var. is non-abelian]{The de Jong fundamental group of a non-trivial abelian variety is non-abelian}

\author{Sean Howe}
\date{\today}

\begin{document}

\begin{abstract}
We show that the de Jong fundamental group of any non-trivial abelian variety over a complete algebraically closed extension $C/\mathbb{Q}_p$ is non-abelian. Generalizing an argument for $\mathbb{P}^1_C$, we also show that the de Jong fundamental group of any connected rigid analytic variety over $C$ admitting a non-constant map to $\mathbb{P}^n_C$ (e.g., an abelian variety) depends on $C$ and is big. 
\end{abstract}

\maketitle


\section{Introduction}
Let $C$ be an algebraically closed field of characteristic zero. If $A/C$ is an abelian variety and $0 \in A(C)$ is the identity element, then the \'{e}tale fundamental group $\pi_{1,\et}(A,0)$ is identified with $T_{\hat{\mathbb{Z}}}A=\varprojlim_n A[n](C)$ by its translation action on the universal profinite \'{e}tale cover $\tilde{A}=\varprojlim_n A$ (in both limits the indices are positive integers ordered by divisibility, and the transition map for $m|n$ is multiplication by $n/m$). If $C$ is a complete algebraically closed extension of $\mathbb{Q}_p$, then we can also pass to the associated rigid analytic space $A^{\mathrm{rig}}/C$. Its category of finite \'{e}tale covers is equivalent to the category of finite \'{e}tale covers of $A$, so that \[ \pi_{1,\et}(A^\mathrm{rig},0)=\pi_{1,\et}(A,0)=T_{\hat{\mathbb{Z}}}A.\]
In particular, $\pi_{1,\et}(A^{\mr{rig}},0)$ is an abelian group. 

However, in the rigid analytic setting, there are more general notions of covering spaces that mix properties of topological and finite \'{e}tale coverings. One such notion is that of de Jong coverings as introduced in \cite{deJong.EtaleFundamentalGroupsOfNonArchimedeanAnalyticSpaces}, and there is an associated de Jong fundamental group $\pi_{1, \mathrm{dJ}}(A^{\mr{rig}},0)$, a pro-discrete topological group whose continuous actions on discrete sets correspond to de Jong coverings of $A^{\mr{rig}}$. It is well known that de Jong fundamental groups can be much larger than classical \'{e}tale fundamental groups: for example, although $\pi_{1,\et}(\mathbb{P}^{n,\mr{rig}}_C)=\pi_{1,\et}(\mathbb{P}^{n}_C)=\{1\}$, for $n \geq 1$ the de Jong fundamental group $\pi_{1,\mathrm{dJ}}(\mathbb{P}^{n,\mr{rig}}_C)$ is large: a connected component over $C$ of the height $n+1$ Lubin-Tate tower furnishes, via the Gross-Hopkins period map, a tower of connected de Jong coverings of $\mathbb{P}^{n,\mr{rig}}_C$. For $y \in \mathbb{P}^{n,\mr{rig}}(C)$, this tower is classified by a continuous homomorphism
\begin{equation}\label{eq.deJong-surjections} \rho_n: \pi_{1,\mathrm{dJ}}(\mathbb{P}^{n,\mr{rig}}_C, y) \rightarrow \SL_{n+1}(\mathbb{Q}_p), \end{equation}
and this map is a surjection by \cite[Proposition 7.4]{deJong.EtaleFundamentalGroupsOfNonArchimedeanAnalyticSpaces}. 

For abelian varieties over $C$ of bad reduction, there are well-known connected de Jong coverings with infinite discrete fibers coming from  Raynaud uniformization. These are all abelian and, moreover, the associated covering groups are free $\mathbb{Z}$-modules of finite rank so that, up to profinite completion, these covers are detected already by the classical \'{e}tale fundamental group. The main point of this note is that there are abundant de Jong covers beyond this construction. In particular:
\begin{theorem}\label{theorem.non-abelian}
    Let $C/\mathbb{Q}_p$ be a complete algebraically closed extension, let $A/C$ be an abelian variety, and let $0 \in A(C)$ be the identity element. If the dimension of $A$ is at least $1$, then $\pi_{1,\mathrm{dJ}}(A^{\mr{rig}}, 0)$ is not an abelian group.  
\end{theorem}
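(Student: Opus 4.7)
The plan is to use a projective embedding of $A$ to pull back the Lubin-Tate/Gross-Hopkins cover of $\mathbb{P}^n_C$ described in \eqref{eq.deJong-surjections}, and show that the resulting monodromy in $\SL_{n+1}(\mathbb{Q}_p)$ is non-abelian.

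First, since $A$ is projective over $C$, any very ample line bundle gives a closed immersion $A \hookrightarrow \mathbb{P}^n_C$ for some $n \geq 1$. Analytification produces a non-constant rigid analytic morphism $f \colon A^{\mr{rig}} \to \mathbb{P}^{n,\mr{rig}}_C$; set $y = f(0)$. Composing the induced homomorphism $f_\ast$ with the surjection $\rho_n$ yields a continuous homomorphism
\[
\sigma \colon \pi_{1,\mathrm{dJ}}(A^{\mr{rig}}, 0) \longrightarrow \SL_{n+1}(\mathbb{Q}_p).
\]
Since $\SL_{n+1}(\mathbb{Q}_p)$ is non-abelian for every $n \geq 1$, and the closure of an abelian subgroup of a Hausdorff topological group is abelian, it suffices to show that the image of $\sigma$ is dense in $\SL_{n+1}(\mathbb{Q}_p)$.

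Under the de Jong Galois correspondence, the image $H := \Image(\sigma)$ satisfies $HK = \SL_{n+1}(\mathbb{Q}_p)$ for an open compact subgroup $K \leq \SL_{n+1}(\mathbb{Q}_p)$ if and only if the pullback $f^\ast M_K$ of the corresponding finite-level cover $M_K \to \mathbb{P}^{n,\mr{rig}}_C$ in the Lubin-Tate tower is connected as a de Jong cover of $A^{\mr{rig}}$. Thus $H$ is dense if and only if $f^\ast M_K$ is connected for every such $K$. Since $f$ is a closed immersion with $\dim A \geq 1$, the problem reduces to the geometric statement that the restriction of each connected finite level $M_K$ to the positive-dimensional closed analytic subvariety $A^{\mr{rig}} \subset \mathbb{P}^{n,\mr{rig}}_C$ remains connected.

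I expect this last connectedness claim to be the main obstacle: one must rule out that restricting the Lubin-Tate tower to $A^{\mr{rig}}$ decomposes it into many components, which could collapse the monodromy onto a proper (possibly abelian) subgroup. The natural input is the étaleness and connectedness of the Gross-Hopkins period map, combined with an irreducibility or slicing argument---consistent with the abstract's mention of ``generalizing an argument for $\mathbb{P}^1_C$'', which suggests first handling the case of embedded curves and then reducing the higher-dimensional case to it. Once this geometric input is in place, density of $H$ and the non-abelianness of $\SL_{n+1}(\mathbb{Q}_p)$ conclude the proof.
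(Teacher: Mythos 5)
Your reduction to density of the monodromy image in $\SL_{n+1}(\mathbb{Q}_p)$ leaves the entire difficulty unresolved, and the step you flag as ``the main obstacle'' --- that every connected finite level $M_K$ of the Lubin--Tate tower stays connected after restriction to the closed subvariety $A^{\mr{rig}} \subseteq \mathbb{P}^{n,\mr{rig}}_C$ --- is not something the suggested inputs will deliver. Connectedness of the total space of a covering (or \'{e}taleness of the Gross--Hopkins period map) says nothing about connectedness of its restriction to a closed subvariety; that restriction is governed precisely by the image of $\rho_n \circ f_*$, which is the unknown you are trying to compute, so the argument is circular as stated. Moreover, density is a far stronger conclusion than non-abelianness, and I know of no slicing or irreducibility argument that establishes it here. (Your reading of the abstract is also off: ``generalizing an argument for $\mathbb{P}^1_C$'' refers to Theorem~\ref{theorem.generators} in \S\ref{s.big}, not to the proof of Theorem~\ref{theorem.non-abelian}.)

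The paper avoids any monodromy computation. It takes $n=\dim A$ and, instead of a closed immersion, projects a projective embedding down to a \emph{generically \'{e}tale} map $f: A \to \mathbb{P}^n_C$; since $\mathbb{P}^n_C$ is simply connected and $f$ is proper, $f$ cannot be \'{e}tale everywhere, so some $v \in \Lie A$ lies in $\ker df_x$ at one point but not in $\ker df_y$ at another. If the image of $\rho_n\circ f_*$ were abelian, Lemma~\ref{lemma.constant-Sen-abelian-torsor} would force the geometric Sen morphism of the pulled-back torsor to be \emph{constant} over $A$ (this uses that $H^0(A,\Omega_{A/C})$ consists of translation-invariant forms); but by functoriality (Lemma~\ref{lemma.functoriality}) and the pointwise injectivity of $\kappa_{\rho_n}$ (Lemma~\ref{lemma.sen-comp}), its pointwise kernel equals $\ker df$, which varies --- contradiction. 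The paper even remarks that the surjectivity of $\rho_n$ is not used, only the ``softer'' Sen-theoretic injectivity; your route, by contrast, requires the hardest possible version of the monodromy statement. To salvage your approach you would need to supply a proof of the level-by-level connectedness claim, which is not available by the means you describe.
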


In particular, in the setting of \cref{theorem.non-abelian}, the natural map $\pi_{1, \mathrm{dJ}}(A^{\mathrm{rig}},0)\rightarrow \pi_{1, \et}(A^\mathrm{rig},0)$ is not injective and thus finite \'{e}tale coverings do not detect all de Jong coverings. This is in contrast to the complex analytic setting where the topological fundamental group of a complex abelian variety is a finite free $\mathbb{Z}$-module and thus injects into its profinite completion, the \'{e}tale fundamental group. 

\begin{remark}\label{remark.surprising} From a classical perspective, an abelian variety having a non-abelian fundamental group is surprising. On the other hand, it is perhaps not as surprising as the non-triviality  of the fundamental group of $\mathbb{P}^{n,\mr{rig}}_C$ furnished already by the covers of \cref{eq.deJong-surjections}, and we use these covers in our proof of \cref{theorem.non-abelian}. Note that, in \cite{Howe.ThedeJongFundamentalGroupOfP1C}, we used these covers for $n=1$ to give two constructions of many non-isomorphic local systems on $\mathbb{P}^{1,\mr{rig}}_C$, then deduced that the fundamental group of $\mathbb{P}^{1,\mr{rig}}_C$ depends on the choice of $C$ and can be big. In \S\ref{s.big}, we generalize both constructions to obtain the same result for the fundamental group of any connected rigid analytic variety over $C$ admitting a non-constant map to $\mathbb{P}^{n,\mr{rig}}_C$ (e.g., an abelian variety).
\end{remark}

For $n=\dim A$, we establish \cref{theorem.non-abelian} by pulling back the cover of $\mathbb{P}^{n,\mr{rig}}_C$ in \cref{eq.deJong-surjections} along a generically \'{e}tale map $A \rightarrow \mathbb{P}^n_C$ then using geometric Sen theory to show the monodromy of this pullback remains non-abelian. The proof also implies that the monodromy is infinite, an observation we first learned from Sasha Petrov.

\begin{question} This method of proof does not work for a very general abeloid variety $A/C$ --- indeed, such an $A$ may not admit \emph{any} rational functions, in which case the covers of \cref{eq.deJong-surjections} cannot be used to produce any candidate non-abelian covers of $A$. It is thus natural to ask: is the de Jong fundamental group of a non-trivial abeloid non-abelian? We note that, for $A/C$ abeloid, the usual profinite \'{e}tale fundamental group is ``as expected," i.e. equal to $T_{\hat{\mathbb{Z}}}A$ (see \cite[Appendix A]{BellovinCaiHowe.CharacterizingPerfectoidCoversOfAbelianVarieties}). 
\end{question}

The key ingredients in the proof of \cref{theorem.non-abelian} are the injectivity of the geometric Sen morphism for the Lubin-Tate tower (\cref{lemma.sen-comp}, which follows from the computation of the geometric Sen morphism given in \cite{DospinescuRodriguezCamargo.JacquetLanglands} or a more general computation in \cite{Howe.GeometricSenAndKodairaSpencer}), and the observation that, if the monodromy is abelian, then the geometric Sen morphism must be constant (\cref{lemma.constant-Sen-abelian-torsor}). We treat these preliminaries in \cref{s.preliminaries} and then use them to prove \cref{theorem.non-abelian} in \cref{s.proof}. In \S\ref{s.big} we state and prove Theorem \ref{theorem.generators}, the generalization of \cite[Theorem 1] {Howe.ThedeJongFundamentalGroupOfP1C} that was described in \cref{remark.surprising}.  

\begin{remark}
Our proof of Theorem \ref{theorem.non-abelian} does not use the surjectivity of \cref{eq.deJong-surjections}; all that is needed is the pointwise injectivity of the associated geometric Sen morphism (a ``softer" fact than the actual monodromy computation for \cref{eq.deJong-surjections}). 
\end{remark}

\subsection*{Acknowledgements} We thank Rebecca Bellovin, Hanlin Cai, and Ben Heuer for helpful discussions about profinite \'{e}tale covers of abelian varieties around \cite{BellovinCaiHowe.CharacterizingPerfectoidCoversOfAbelianVarieties}. We thank Sasha Petrov for helpful comments on an earlier draft and for pointing out to us that the geometric Sen morphism can be used to show that, for $X/C$ a connected rigid  variety and $f: X \rightarrow \mathbb{P}^n_C$ a map that is not locally constant, the pullback of the Lubin-Tate local system along $f$ is non-trivial. This work was supported in part by National Science Foundation grants DMS-2201112 and DMS-2501816.

\section{Preliminaries}\label{s.preliminaries}

In this section we establish the technical results needed in the proof of \cref{theorem.non-abelian}.  

\subsection{The geometric Sen morphism and its functoriality}
Recall from \cite[Theorem 1.0.4] {RodriguezCamargo.GeometricSenTheoryOverRigidAnalyticSpaces} that, for $X/C$ a smooth rigid analytic variety and $K$ a $p$-adic Lie group with Lie algebra $\mathfrak{k}$, any pro-\'{e}tale $K$-torsor $\tilde{X}/X$ gives rise to a geometric Sen morphism
\[ \theta_{\tilde{X}} \in \left(\Omega^1_{X/C}\otimes_{\mathcal{O}} \left(\tilde{\mathfrak{k}}  \otimes_{\ul{\mathbb{Q}_p}} {\widehat{\mathcal{O}}}(-1)\right) \right)(X), \]
where $\tilde{\mathfrak{k}}=\tilde{X} \times^{K} \ul{\mathfrak{k}}$ is the twisted form of the constant local system $\ul{\mathfrak{k}}$ associated to $\tilde{X}$ and the adjoint action of $K$ on $\mathfrak{k}$. Dually, we view this as a morphism on $X_\proet$,
\[ \kappa_{\tilde{X}}: T_{X/C} \otimes_{\mathcal{O}} \hat{\mathcal{O}} \rightarrow \tilde{\mathfrak{k}}\otimes_{\ul{\mbb{Q}_p}} \hat{\mathcal{O}}(-1). \]

We recall also from the statement of \cite[Theorem 1.0.4] {RodriguezCamargo.GeometricSenTheoryOverRigidAnalyticSpaces} that $\kappa_{\tilde{X}}$ is natural for pullbacks and push-outs of torsors:
\begin{lemma}[Functoriality of $\kappa$] \label{lemma.functoriality} Let $K$ be a $p$-adic Lie group. 
\begin{enumerate}
    \item If $f:X \rightarrow Y$ is a map of smooth rigid analytic varieties over $C$ and $\tilde{Y}/Y$ is a pro-\'{e}tale $K$-torsor, then $\kappa_{f^*\tilde{Y}}=f^*\kappa_{\tilde{X}} \circ df$, where $df: T_{X/C} \rightarrow \pi^*T_{Y/C}$ 
    is the derivative of $f$.
    \item If $H$ is a $p$-adic Lie group  and $\rho: H\rightarrow K$ is a continuous homomorphism, then for any smooth rigid analytic variety $X/C$ and pro-\'{e}tale $H$-torsor $\tilde{X}/X$, 
 $\kappa_{\tilde{X} \times^{\rho} \ul{H}} = d\rho \circ \kappa_{\tilde{X}}$, where $\mathfrak{h}:=\Lie H$ and $d\rho: \mathfrak{h} \rightarrow \mathfrak{k}$
    is the derivative.
\end{enumerate}
\end{lemma}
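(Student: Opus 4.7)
The plan is to derive both parts directly from the construction of $\kappa_{\tilde{X}}$ in \cite{RodriguezCamargo.GeometricSenTheoryOverRigidAnalyticSpaces}; indeed, both naturalities are asserted as part of \cite[Theorem 1.0.4]{RodriguezCamargo.GeometricSenTheoryOverRigidAnalyticSpaces}, so the cleanest approach is to cite that theorem directly. For a conceptual verification, the key point to recall is that $\kappa_{\tilde{X}}$ measures the infinitesimal torsor action: a tangent vector on $X$ is sent to the Lie-algebra-valued cocycle recording how $K$ acts on $\tilde{X}$ along that direction, with the Tate twist arising from the standard normalization by Hodge--Tate weights.

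For (1), I would work on a pro-\'{e}tale cover of $X$ over which $f^*\tilde{Y}$ is trivialized via the pullback of a trivialization of $\tilde{Y}$. On such a cover, the infinitesimal $K$-action along a tangent vector $w \in T_{X/C}$ factors through its image $df(w) \in f^*T_{Y/C}$, and unwinding the definition of $\kappa$ on both sides immediately yields $\kappa_{f^*\tilde{Y}} = f^*\kappa_{\tilde{Y}} \circ df$. For (2), the pushout $\tilde{X}\times^{\rho} \ul{K}$ is by construction obtained from $\tilde{X}$ by applying $\rho$ to its transition cocycles; on infinitesimal cocycles this corresponds to applying $d\rho$, giving $\kappa_{\tilde{X}\times^{\rho} \ul{K}} = d\rho \circ \kappa_{\tilde{X}}$.

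The main obstacle is bookkeeping rather than conceptual: one must check that the implicit identifications of the twisted Lie algebras $\tilde{\mathfrak{k}}$, $\tilde{\mathfrak{h}}$ and their Tate twists behave properly under pullback (where $f^*\tilde{\mathfrak{k}}$ is identified with the twisted Lie algebra of $f^*\tilde{Y}$) and under pushout (where the twisted Lie algebra of $\tilde{X}\times^{\rho}\ul{K}$ is obtained from $\tilde{\mathfrak{h}}$ via $d\rho$ and the change-of-structure-group identification). These compatibilities are all built into the construction of $\kappa$ in loc.~cit., which is why the functoriality is recorded there as part of the main theorem.
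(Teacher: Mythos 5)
Your proposal matches the paper exactly: the lemma is simply recalled from the statement of \cite[Theorem 1.0.4]{RodriguezCamargo.GeometricSenTheoryOverRigidAnalyticSpaces}, with no further argument given, so citing that theorem directly is precisely what the paper does. Your conceptual sketch (and your correct reading of the typo $f^*\kappa_{\tilde{X}}$ for $f^*\kappa_{\tilde{Y}}$ in part (1)) is a harmless bonus.
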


\subsection{Representations of the de Jong fundamental group} Suppose $X/C$ is a smooth connected rigid analytic variety and $x \in X(C)$. If $K$ is a $p$-adic Lie group and $\rho: \pi_{1,\mathrm{dJ}}(X,x) \rightarrow K$ is a continous homomorphism, then the discrete $\pi_{1,\mathrm{dJ}}(X,x)$-sets $K/U$, as $U$-varies over open subgroups of $K$, give rise to a tower $\tilde{X}_{\rho}=(X_{K/U, \rho})_U$ of \'{e}tale covers of $X$. Viewed as an object of $X_{\proet}$, $\tilde{X}_{\rho}$ is a $K$-torsor. In this case, we write $\kappa_{\rho}=\kappa_{\tilde{X}_\rho}$ for the associated geometric Sen morphism.  

In particular, we can apply this to the $\rho_n$ of \cref{eq.deJong-surjections}, whose associated torsor is a geometric connected component of the infinite level Lubin-Tate tower. We note that, the associated $\mathbb{Q}_p$-local system $\mathbb{L}$, viewed as a sheaf on the $v$-site $\mathbb{P}^{n,\mr{rig}}_{C,v}$ (or on perfectoid objects in $\mathbb{P}^{n,\mr{rig}}_{C,\proet}$), admits a construction by modifications of vector bundles on the Fargues-Fontaine curve as in \cite[Lecture 24]{ScholzeWeinstein.BerkeleyLecturesOnPAdicGeometryAMS207}. In particular, it sits in a short exact sequence involving the period sheaf $\mathbb{B}$ sending a perfectoid $S/\mathbb{P}^{n,\mr{rig}}_C$ to $\mathcal{O}(Y_{S^\flat})$ (for $Y_{S^\flat}$ as in \cite[Definition II.1.15]{Fargues.GeometrizationOfTheLocalLanglandsCorrespondenceAnOverview}). Explicitly, for $\varphi$ the Frobenius on $\mathbb{B}$ and $\theta: \mathbb{B} \rightarrow \mathcal{O}$ Fontaine's map, we can write this short exact sequence as
\begin{equation}\label{eq.ses-defining} 0 \rightarrow \mathbb{L} \rightarrow \mathbb{B}^{\varphi^{n+1}=p} \xrightarrow{b \mapsto \sum_{i=1}^{n+1} \theta(\varphi^{i-1}(b)) \cdot z_i}\mathcal{O}(1) \rightarrow 0 \end{equation}
where here $\mathcal{O}(1)$ denotes the usual line bundle on $\mathbb{P}^{n,\mr{rig}}_C$ (viewed as a $v$-bundle) and $z_1, \ldots, z_{n+1}$ are its global sections giving the usual homogeneous coordinates.

\begin{lemma}\label{lemma.sen-comp}
    For $\rho_n$ as in \cref{eq.deJong-surjections}, the geometric Sen morphism $\kappa_{\rho_n}$ is an injection at every geometric point. 
\end{lemma}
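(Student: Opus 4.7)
My plan is to identify $\kappa_{\rho_n}$ explicitly as the Kodaira--Spencer class of the Hodge--Tate filtration on $\mathbb{L}$, and then deduce pointwise injectivity from the canonical description of the tangent bundle of $\mathbb{P}^n$.

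First, I would use \eqref{eq.ses-defining} to extract the Hodge--Tate filtration on $\mathbb{L}\otimes_{\mathbb{Q}_p}\hat{\mathcal{O}}$. Tensoring \eqref{eq.ses-defining} with $\hat{\mathcal{O}}$ and applying Fontaine's map $\theta$ yields a short exact sequence of $v$-bundles
\[ 0 \to \mathcal{O}(1) \to \mathbb{L}\otimes_{\mathbb{Q}_p}\hat{\mathcal{O}} \to \mathcal{O}^n \to 0 \]
on $\mathbb{P}^{n,\mr{rig}}_C$, whose sub-line at a geometric point $y=[z_1:\cdots:z_{n+1}]$ is, by construction of the second map in \eqref{eq.ses-defining}, the tautological line $L_y \subset C^{n+1}$ determined by $y$. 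According to the computation in \cite{DospinescuRodriguezCamargo.JacquetLanglands} (or the more general formalism of \cite{Howe.GeometricSenAndKodairaSpencer}), the geometric Sen morphism $\kappa_{\rho_n}$ is precisely the Kodaira--Spencer class of this filtration. In particular, composing with the natural projection $\mathrm{ad}(\mathbb{L}\otimes\hat{\mathcal{O}})\otimes\hat{\mathcal{O}}(-1) \twoheadrightarrow \mathrm{Hom}(\mathcal{O}(1), \mathcal{O}^n)\otimes \hat{\mathcal{O}}(-1) \cong \mathcal{O}(-1)^n\otimes \hat{\mathcal{O}}(-1)$ gives the map on $T_{\mathbb{P}^n/C}\otimes\hat{\mathcal{O}}$ recording the infinitesimal variation of the tautological sub-line.

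Second, pointwise injectivity is then essentially a classical fact about $\mathbb{P}^n$. At a geometric point $y$ corresponding to a line $L_y \subset C^{n+1}$, the canonical isomorphism
\[ T_y\mathbb{P}^n \cong \mathrm{Hom}_C(L_y, C^{n+1}/L_y) \]
is precisely what the Kodaira--Spencer map above computes on the fiber over $y$, after trivializing $\hat{\mathcal{O}}(-1)|_y$. Since this identification is an isomorphism, the composition of $\kappa_{\rho_n}|_y$ with the projection to $\mathrm{Hom}(\mathcal{O}(1),\mathcal{O}^n)(-1)|_y$ is already injective; a fortiori $\kappa_{\rho_n}$ itself is injective at $y$.

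The main obstacle is the first step --- explicitly matching the Sen morphism of the $\SL_{n+1}(\mathbb{Q}_p)$-torsor $\tilde{X}_{\rho_n}$ with the Kodaira--Spencer class of the Hodge--Tate filtration. This is the substantive computational input, and is precisely what the cited references provide; once granted, the remaining pointwise check is just the standard presentation of $T_{\mathbb{P}^n}$ as $\mathrm{Hom}(\mathcal{O}(-1), \mathcal{O}^{n+1}/\mathcal{O}(-1))$.
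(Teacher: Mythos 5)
Your proposal is correct and follows essentially the same route as the paper: both arguments take the identification of $\kappa_{\rho_n}$ with the Hodge--Tate/Kodaira--Spencer data from \cite{DospinescuRodriguezCamargo.JacquetLanglands} (or \cite{Howe.GeometricSenAndKodairaSpencer}) as the substantive input, and then deduce pointwise injectivity from the standard identification of $T_y\mathbb{P}^n$ with $\Hom(L_y, C^{n+1}/L_y)$ sitting inside $\mathfrak{gl}_{n+1}$ --- the paper phrases this as the inclusion of the minuscule piece $\mathrm{Fil}^1_{\mathrm{HT}}$ of the adjoint bundle, you phrase it via the off-diagonal block, which is the same observation. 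The only blemishes are bookkeeping ones in your Hodge--Tate sequence (the sub-line carries a Tate twist, and calling it both $\mathcal{O}(1)$ and the tautological line $L_y$ is inconsistent, the latter being $\mathcal{O}(-1)$), none of which affects the injectivity conclusion.
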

\begin{proof}
    By \cref{lemma.functoriality}-(2), it suffices to treat $\rho_n$ as a map to $\GL_{n+1}(\mathbb{Q}_p)$. We note that over $\mathbb{P}^n$ we have a universal filtration on the trivial $\GL_{n+1}$-torsor and thus, via the adjoint representation, a filtration on $\mathfrak{gl}_{n+1} \otimes_{\mathbb{Q}_p} \hat{\mathcal{O}}$. We also have the Hodge-Tate filtration on $ \tilde{\mathfrak{gl}}_{n+1}\otimes_{\ul{\mbb{Q}_p}} \hat{\mathcal{O}}$. As a specific case of the computation given for local Shimura varieties in \cite[\S 4.3]{DospinescuRodriguezCamargo.JacquetLanglands},  $\kappa_{\rho_n}$ is then canonically identified with the map
    \[ T_{\mathbb{P}^n_C} \otimes_{\mathcal{O}} \hat{\mathcal{O}} = \mathrm{gr}^{-1}_{\mathrm{univ}} \left(\mathfrak{gl}_{n+1} \otimes_{\mathbb{Q}_p} \hat{\mathcal{O}}\right) =\mathrm{gr}^1_{\mathrm{HT}}\left( \tilde{\mathfrak{gl}}_{n+1}\otimes_{\ul{\mbb{Q}_p}} \hat{\mathcal{O}}\right)(-1) \]
where the second equality is the Hodge-Tate comparison and we note that, because the filtrations are minuscule, the last term is equal to
\[ \mathrm{Fil}^1_{\mathrm{HT}}\left( \tilde{\mathfrak{gl}}_{n+1}\otimes_{\ul{\mbb{Q}_p}} \hat{\mathcal{O}}\right)(-1) \subseteq \tilde{\mathfrak{gl}}_{n+1}\otimes_{\ul{\mbb{Q}_p}} \hat{\mathcal{O}}(-1).\]
Alternatively, this computation follows from a more general computation relating the geometric Sen and Kodaira-Spencer morphisms for de Rham torsors  \cite{Howe.GeometricSenAndKodairaSpencer}. Indeed, this $\GL_{n+1}(\mathbb{Q}_p)$-torsor is the base change to $C$ of a de Rham $\GL_{n+1}(\mathbb{Q}_p)$ torsor on $\mathbb{P}^{n,\mathrm{rig}}_{\breve{\mathbb{Q}}_p}$ whose associated filtered $\mathrm{GL}_{n+1}$-torsor with integrable connection is, by construction, the trivial torsor with the trivial connection and universal filtration.
\end{proof}

\subsection{Abelian torsors}
 If $H$ is an abelian $p$-adic Lie group with Lie algebra $\mathfrak{h}$, then the adjoint action on $\mathfrak{h}$ is trivial. In particular, if $X/C$ is a smooth rigid analytic variety and $\tilde{X}/X$ is an $H$-torsor, $\tilde{\mathfrak{h}}$ is simply the constant local system $\ul{\mathfrak{h}}$ and 
\begin{equation}\label{eq.constant-abelian} \kappa_{\tilde{X}} \in \Hom(T_{X/C} \otimes_{\mathcal{O}} \hat{\mathcal{O}}, \mathfrak{h} \otimes_{\mbb{Q}_p} \hat{\mathcal{O}}(-1)) = H^0(X, \Omega_{X/C}) \otimes_{\mathbb{Q}_p} \mathfrak{h}(-1). \end{equation}

In particular, when $X$ is an abelian variety, we immediately obtain:
\begin{lemma}\label{lemma.constant-Sen-abelian-torsor}
Suppose $A/C$ is an abelian variety, $H$ is an abelian $p$-adic Lie group, and $\tilde{A}/A$ is a pro-\'{e}tale $H$-torsor. Then, the geometric Sen morphism 
\[ \kappa_{\tilde{A}}: \Lie A \otimes_{C} \hat{\mathcal{O}} = T_{A/C} \otimes_{\mathcal{O}} \hat{\mathcal{O}} \rightarrow \mathfrak{h} \otimes_{\mbb{Q}_p} \hat{\mathcal{O}}(-1) \]
is constant, i.e., it is induced by the base change from $C$ to $\hat{\mathcal{O}}$ of a $C$-linear map 
\[ \Lie A \rightarrow \mathfrak{h} \otimes_{\mbb{Q}_p} C(-1). \]
\end{lemma}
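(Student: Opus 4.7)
The plan is to specialize \cref{eq.constant-abelian} to the setting of an abelian variety, exploiting the canonical triviality of the tangent bundle of $A$.

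First, since $H$ is abelian, the discussion preceding the lemma gives that $\tilde{\mathfrak{h}} = \underline{\mathfrak{h}}$ is a constant local system, so \cref{eq.constant-abelian} applies and realizes $\kappa_{\tilde A}$ as a global element of $H^0(A, \Omega^1_{A/C}) \otimes_{\mathbb{Q}_p} \mathfrak{h}(-1)$. Next, since $A$ is an abelian variety, the translation action canonically trivializes the tangent bundle by invariant vector fields: $T_{A/C} \cong \Lie A \otimes_C \mathcal{O}_A$ and dually $\Omega^1_{A/C} \cong (\Lie A)^* \otimes_C \mathcal{O}_A$. As $A$ is connected and proper over $C$, taking global sections yields $H^0(A, \Omega^1_{A/C}) = (\Lie A)^*$, and hence
\[ \kappa_{\tilde A} \in (\Lie A)^* \otimes_{\mathbb{Q}_p} \mathfrak{h}(-1). \]

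The remaining step is bookkeeping: using the $C$-structure on $(\Lie A)^*$ together with $\mathfrak{h}(-1) = \mathfrak{h} \otimes_{\mathbb{Q}_p} \mathbb{Q}_p(-1)$, the displayed space is canonically identified with $\Hom_C(\Lie A, \mathfrak{h} \otimes_{\mathbb{Q}_p} C(-1))$. The resulting $C$-linear map $\Lie A \to \mathfrak{h} \otimes_{\mathbb{Q}_p} C(-1)$ is, by construction, the one whose base change from $C$ to $\hat{\mathcal{O}}$ (via the trivialization $T_{A/C} \otimes_{\mathcal{O}} \hat{\mathcal{O}} = \Lie A \otimes_C \hat{\mathcal{O}}$) recovers $\kappa_{\tilde A}$, which is the desired conclusion.

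There is no substantive obstacle: given \cref{eq.constant-abelian}, the lemma is essentially a direct specialization, with the single input being that the tangent bundle of an abelian variety is globally generated by $\Lie A$. The only care required is in tracking the interplay between tensor products over $\mathbb{Q}_p$ and over $C$ and the placement of the Tate twist.
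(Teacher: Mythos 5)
Your proof is correct and matches the paper's approach: the paper likewise derives the lemma as an immediate consequence of \cref{eq.constant-abelian} together with the triviality of the (co)tangent bundle of an abelian variety, i.e. $H^0(A,\Omega^1_{A/C})=(\Lie A)^*$. The paper states this without further elaboration, so your spelled-out bookkeeping of the identification with $\Hom_C(\Lie A, \mathfrak{h}\otimes_{\mathbb{Q}_p}C(-1))$ is a faithful (and slightly more explicit) rendering of the intended argument.
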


\begin{remark}
    The simplification in the case of abelian covers as in \cref{eq.constant-abelian} was observed already in \cite{BellovinCaiHowe.CharacterizingPerfectoidCoversOfAbelianVarieties} where, in particular, we computed the geometric Sen morphism for all profinite \'{e}tale torsors over abelian varieties (which are necessarily abelian). 
\end{remark}

\section{Proof of \cref{theorem.non-abelian}}\label{s.proof}

We now prove \cref{theorem.non-abelian}. 

\begin{proof}[Proof of \cref{theorem.non-abelian}]
    Let $n=\dim A$. We may choose an embedding of $A$ into $\mathbb{P}^m_C$, $m > n$, and then project onto a generic $n$-plane $\mathbb{P}^n_C \subseteq \mathbb{P}^m_C$ to obtain a generically \'{e}tale map $f: A \rightarrow \mathbb{P}^n_C$.  We claim that the pullback of the $\SL_{n+1}(\mathbb{Q}_p)$-torsor associated to $f$ has non-abelian monodromy, i.e., $\rho_n \circ f_*$ has non-abelian image, where $f_*: \pi_{1,\mathrm{dJ}}(A^{\mr{rig}},0) \rightarrow \pi_{1,\mathrm{dJ}}(\mathbb{P}^{n, \mr{rig}}_C, \pi(0))$ is the map induced by $f$. 
    
    To see this, first observe that the map $f$ is proper as $A/C$ is proper and $\mathbb{P}^n_C/C$ is separated. Thus $f$ cannot be \'{e}tale since $\mathbb{P}^n_C$ has no non-trivial connected finite \'{e}tale covers. Thus $df$ is not an isomorphism, so there exists a nonzero vector $v \in \Lie A=H^0(A, T_{A/C})$ and a point $x \in A(C)$ such that $df_x(v)=0$, where $df_x$ is the derivative at that point, a map of finite dimensional $C$-vector spaces $T_{A/C,x} \rightarrow T_{\mathbb{P}^n_C/C, f(x)}$. Since $f$ is generically \'{e}tale, there also exists a point $y \in A(C)$ such that $df_y(v) \neq 0$. 

   Now, suppose $\rho_n \circ \pi_*$ has abelian image, and write $H \leq \SL_n(\mathbb{Q}_p)$ for the (abelian) closure of its image, a $p$-adic Lie group. Write $\mathfrak{h}:=\Lie H$, and write $\rho$ for the induced map $\pi_{1,\mathrm{dJ}}(A^{\mr{rig}},0) \rightarrow H$. Then, by \cref{lemma.constant-Sen-abelian-torsor}, $\kappa_\rho$ is a constant map. Thus it cannot annihilate the vector $v \in \Lie A$ at one point but not another.    However, for $\iota: H \rightarrow \SL_{n+1}(\mathbb{Q}_p)$ the inclusion, the functorialities of \cref{lemma.functoriality} give
    \[ d\iota\circ \kappa_\rho = \kappa_{\rho_n \circ \pi_*}=\pi^*\kappa_{\rho_n} \circ d\pi \]
     In particular, since $d\iota$ and $\pi^*\kappa_{\rho_n}$ are injective at every geometric point (the latter by \cref{lemma.sen-comp}), the kernels of $\kappa_\rho$ and $d\pi$ agree at every geometric point. Thus we obtain a contradiction since $v$ is in the kernel of $d\pi_x$ but not in the kernel of $d\pi_y$. 
\end{proof}

\section{Fundamental groups of quasi-projective rigid analytic varieties depend on $C$ and can be big}\label{s.big}

We now establish the following generalization of \cite[Theorem 1]{Howe.ThedeJongFundamentalGroupOfP1C} that applies, in particular, to abelian varieties (all of the key ideas appear already in  \cite{Howe.ThedeJongFundamentalGroupOfP1C}).

\newcommand{\dJ}{\mathrm{dJ}}
\begin{theorem}\label{theorem.generators}
   Suppose $X/C$ is a connected rigid analytic variety such that there exists a non-constant map $f: X \rightarrow \mathbb{P}^{n,\mr{rig}}_C$ for some $n\geq 1$.  Then, for $x_0$ in $X(C)$, if $S \subseteq \pi_{1,\dJ}(X,x_0)$ is infinite and $2^S$ has cardinality less than that of $C$, then $S$ is not a set of topological generators for $\pi_{1,\dJ}(X, x_0)$. 
\end{theorem}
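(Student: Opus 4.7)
The plan is to follow the strategy of \cite{Howe.ThedeJongFundamentalGroupOfP1C}. I would first produce a family $\{\mathcal{M}_t\}_{t \in T}$ of $|C|$-many pairwise non-isomorphic de Jong local systems on $X$ with structure group a fixed $p$-adic Lie group $K = \SL_{n+1}(\mathbb{Q}_p)$. Each such local system corresponds to a conjugacy class of continuous homomorphism $\pi_{1,\dJ}(X, x_0) \to K$. If $S$ topologically generates $\pi_{1,\dJ}(X, x_0)$, then every such continuous homomorphism is determined by its restriction to $S$, so the total number of such homomorphisms is at most $|K|^{|S|} = (2^{\aleph_0})^{|S|} = 2^{|S|} = |2^S|$, bounding the number of isomorphism classes of pro-\'{e}tale $K$-torsors on $X$ by $|2^S|$. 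Since $|2^S| < |C|$, this contradicts the existence of the family.

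To construct the family, I would use (generalizations of) the two constructions of \cite{Howe.ThedeJongFundamentalGroupOfP1C}, originally carried out for $\mathbb{P}^{1,\mathrm{rig}}_C$, to produce pairwise non-isomorphic $K$-local systems $\{\mathbb{L}_t\}_{t \in T}$ on $\mathbb{P}^{n,\mathrm{rig}}_C$ with $|T| = |C|$. Natural candidates include the orbit of the Lubin-Tate local system under $\mathrm{PGL}_{n+1}(C) = \mathrm{Aut}(\mathbb{P}^{n,\mathrm{rig}}_C)$ and families obtained by varying the explicit data (the $\varphi$-structure, the coordinates $z_i$, etc.) in the defining short exact sequence \cref{eq.ses-defining}. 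I would then set $\mathcal{M}_t := f^*\mathbb{L}_t$ and verify that non-isomorphism survives pullback. By \cref{lemma.functoriality}-(1), $\kappa_{\mathcal{M}_t} = f^*\kappa_{\mathbb{L}_t} \circ df$, and since $f$ is non-constant there exists $x \in X(C)$ with $df_x \neq 0$. Combined with the pointwise injectivity of $\kappa_{\mathbb{L}_t}$ from \cref{lemma.sen-comp}, any distinction between $\kappa_{\mathbb{L}_t}$ and $\kappa_{\mathbb{L}_{t'}}$ at $f(x)$ that is visible along the nonzero subspace $df_x(T_{X/C, x}) \subseteq T_{\mathbb{P}^n_C/C, f(x)}$ persists to $\kappa_{\mathcal{M}_t}$ and $\kappa_{\mathcal{M}_{t'}}$ at $x$, witnessing non-isomorphism there.

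The main obstacle is ensuring that the family $\{\mathbb{L}_t\}$ on $\mathbb{P}^{n,\mathrm{rig}}_C$ is pairwise non-isomorphic in a sufficiently fine, pointwise, Sen-morphism-detectable sense that moreover survives restriction to an arbitrary nonzero subspace of $T_{\mathbb{P}^n_C/C, f(x)}$. For $n=1$ this is essentially the content of \cite{Howe.ThedeJongFundamentalGroupOfP1C}; for general $n$ the analogous constructions should go through, but require verification. A reasonable way to arrange the robustness needed after pullback is to exhibit $\{\mathbb{L}_t\}$ as a ``generic enough'' orbit---e.g., of the point $f(x)$ (or the line $df_x(T_{X/C,x})$ in its tangent space) under a $C$-algebraic group action whose orbit has cardinality $|C|$ and whose distinguishing invariants are visible on any fixed tangent subspace.
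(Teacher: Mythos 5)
Your counting reduction (a topological generating set $S$ determines every continuous homomorphism to a Hausdorff group by its restriction to $S$, giving at most $(2^{\aleph_0})^{|S|}=2^{|S|}$ local systems of a fixed type) is exactly the paper's, and the high-level plan of distinguishing pullbacks of Lubin--Tate-type local systems by their geometric Sen morphisms is also the right one. But the actual construction of the $|C|$-many pairwise non-isomorphic local systems on $X$ is the heart of the theorem, and you leave it as an acknowledged ``main obstacle'' with candidate families that do not work as stated. The $\mathrm{PGL}_{n+1}(C)$-orbit of $\mathbb{L}$ is the wrong family: for an automorphism $g$, the Sen morphism of $g^*\mathbb{L}$ is still injective at every point (by \cref{lemma.functoriality} and \cref{lemma.sen-comp}), so there is no pointwise vanishing-locus invariant to separate the members of the orbit, and after composing with an arbitrary $f$ you have no mechanism to tell $f^*g^*\mathbb{L}$ from $f^*\mathbb{L}$. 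What the paper uses instead are \emph{ramified} self-maps $\varphi_t$ of $\mathbb{P}^n_C$ (coordinatewise squaring centered at $t\in C^n$) whose differential vanishes identically exactly at $t$; then $\kappa_{\varphi_t^*\mathbb{L}}=\varphi_t^*\kappa_{\mathbb{L}}\circ d\varphi_t$ vanishes identically exactly at $t$, so the vanishing locus of the Sen morphism --- an isomorphism invariant --- pins down $t$, and the loci $f^{-1}(t)\cap B$ separate the pullbacks.

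This points to the second gap: ``$f$ non-constant, hence $df_x\neq 0$ somewhere'' is not enough for the vanishing-locus invariant to survive pullback. The vanishing locus of $\kappa_{f^*\varphi_t^*\mathbb{L}}$ is the set of $y$ with $\mathrm{im}(df_y)\subseteq\ker(d\varphi_{t,f(y)})$, and if $f$ is nowhere submersive this set need not determine $t$ (for $n\geq 2$ the kernel of $d\varphi_t$ is nonzero along whole hyperplanes). The paper therefore restricts to the case where $f$ is a submersion at some smooth point, works on a small ball $B$ where $f|_B$ is a submersion so that the vanishing locus in $B$ is exactly $f^{-1}(t)\cap B$ for $t\in f(B)$, and spends a paragraph reducing a general non-constant $f$ to this case (which requires quasi-compactness or countability hypotheses and still does not cover everything). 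To handle an arbitrary non-constant $f$, the paper runs a second, independent construction: it shows $\Ext^1(\ul{\mathbb{Q}_p},f^*\mathbb{L})$ has cardinality at least $|C|$ by pulling back the defining sequence \cref{eq.ses-defining} along sections $\sum c_i s_i$ of $f^*\mathcal{O}(1)$, and then that $|C|$-many of the resulting rank $n+2$ local systems are pairwise non-isomorphic (using that a fixed local system carries at most $2^{\aleph_0}$ extension structures). Your proposal gestures at ``varying the data in the short exact sequence'' but does not formulate this, and without one of these two mechanisms the proof is incomplete.
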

\begin{remark}\hfill
\begin{itemize}
    \item   Theorem \ref{theorem.generators} applies, e.g., if $X/C$ admits a locally closed embedding into $\mathbb{P}^n$ (i.e. $X/C$ is quasi-projective). In particular, it applies to  abelian varieties. 
    \item As in \cite[Corollary 1]{Howe.ThedeJongFundamentalGroupOfP1C}, Theorem \ref{theorem.generators} implies that $\pi_1(X,x_0)$ can grow arbitrarily large by base change to larger complete algebraically closed $C'/C$.
\end{itemize}
\end{remark}

\begin{proof}[Proof of Theorem \ref{theorem.generators}]

As in the proof of \cite[Theorem 1]{Howe.ThedeJongFundamentalGroupOfP1C}, it suffices to produce a collection of non-isomorphic local systems indexed by a set of the same cardinality as $C$. We give two such constructions, generalizing the constructions of \cite[Lemma 1]{Howe.ThedeJongFundamentalGroupOfP1C} and \cite[Lemma 2]{Howe.ThedeJongFundamentalGroupOfP1C}. The first construction applies for any $C$ but requires a small restriction on $f$ or $X$; it is based on pulling back Lubin-Tate local systems along ramified maps and computing the geometric Sen morphism as in \cite[Lemma 1]{Howe.ThedeJongFundamentalGroupOfP1C}. The second construction applies whenever $C$ has cardinality greater than $2^\mathbb{N}$ (which suffices for the result since the statement of Theorem \ref{theorem.generators} is otherwise vacuous), and is based on computing extensions of the trivial local system by the Lubin-Tate local system and the infinite-dimensionality of the first cohomology of pullbacks of the Lubin-Tate local system (generalizing a very simple part of the argument in \cite{Hansen.ACounterexample}). 

\hfill\\
\noindent\emph{First construction}.
Suppose given a non-constant map $f:X \rightarrow \mathbb{P}^n_C$, $n \geq 1$. This construction will apply under the further assumption that there exists a smooth point $x \in X(C)$ such that $f$ is a submersion at $x$, i.e \[ df_x: T_{X,x} \twoheadrightarrow T_{ \mathbb{P}^{n,\mr{rig}}_C, f(x)}. \]
Note that this is automatic if $n=1$ (since $df_x$ cannot be everywhere zero as the map is not locally constant). For general $n$, one can typically reduce from the existence of a general non-constant $f$ to the case where $f$ is a submersion at a smooth point $x$ as above by composing with projections: this reduction always works if $X$ is quasi-compact using \cite[Theorem 1]{BhattHansen.TheSixFunctorsForZariskiConstructibleSheavesInRigidGeometry} (which implies either the image does not contain all $C$-points, in which case one can project down to a hyperplane, or that the $df_x$ is generically surjective). More generally, if $X$ is of countable type, then the same argument works by applying \cite[Theorem 1]{BhattHansen.TheSixFunctorsForZariskiConstructibleSheavesInRigidGeometry} on a countable covering\footnote{In general, the situation one needs to avoid to run this reduction argument is the case where $f$ is some type of space-filling map, i.e., a map that is nowhere a submersion but still surjective on $C$-points; such a construction seems plausible without the countability assumption.} 
by quasi-compacts then invoking the Baire Category Theorem on $\mathbb{P}^n(C)$ to deduce that if it is not submersive at any $C$-point then it is not surjective on $C$-points. 

Given $f$ and a point $x\in X(C)$ where $f$ is a submersion, by composing with an automorphism of $\mathbb{P}^n_C$, we may assume $f(x)=[0:0:..:1]=0 \in \mathbb{A}^n(C) \subseteq \mathbb{P}^n(C)$. We may then choose a small ball $B$ in the smooth locus $X^{\mr{sm}}(C)$ containing $x$ such that $f|_B$ is a submersion onto its open image $f(B) \subseteq \mathbb{A}^n(C)$.

Now, for $\mathbb{L}$ the Lubin-Tate $\mathbb{Q}_p$-local system of rank $n+1$ on $\mathbb{P}^{n,\mr{rig}}_C$ (associated to $\rho_n$ as in \cref{eq.deJong-surjections}) and $t=(t_1, t_2,\ldots, t_n)\in C^n$, let $\mathbb{L}_t=\varphi_t^*\mathbb{L}$ where $\varphi_t$ is 
\[ \varphi_t: \mathbb{P}^m_C \rightarrow \mathbb{P}^n_C, [z_1:z_2:\ldots :z_{n+1}] \mapsto [(z_1 - t_1z_{n+1})^2: (z_2 - t_2 z_{n+1})^2: \ldots : z_{n+1}^2]. \]
On $\mathbb{A}^m_C$ with coordinates $x_i=\frac{z_{i}}{z_{n+1}}$, this is the map 
\[ (x_1, \ldots, x_n) \mapsto ((x_1-t_1)^2, \ldots, (x_n - t_n)^2). \]
In particular, on $\mathbb{A}^n_C$ the differential $d\varphi_t$ is identically zero only at the point $t$. 

Letting $\kappa_{\bullet}$ denote the geometric Sen morphism of $\bullet$, Lemma \ref{lemma.functoriality}-(1) implies \[\kappa_{\mathbb{L}_t}=\varphi_t^* \kappa_{\mathbb{L}} \circ d\varphi_{t}.\] Invoking \cref{lemma.sen-comp} (noting $\kappa_{\mathbb{L}}$ here is $\kappa_{\rho_n}$ in the statement), it follows that $t$ is the only point in $\mathbb{A}^m_C$ at which $\kappa_{\mathbb{L}_t}$ is identically zero. Applying Lemma \ref{lemma.functoriality}-(1) again,  \[ \kappa_{f^*\mathbb{L}_t}=f^*\kappa_{\mathbb{L}_t} \circ df.\] Because $f|_B$ is a submersion,  for $t \in f(B)$, the locus of points in $B$ where $\kappa_{f^*\mathbb{L}_t}$ vanishes identically is thus $f^{-1}(t) \cap B$. Since these loci are distinct for different $t \in f(B)$, we conclude that the local systems $f^*\mathbb{L}_t$ are non-isomorphic for all $t \in f(B)$. Since the cardinality of the open $f(B) \subseteq C^m$ is the same as the cardinality of $C$, we conclude.

\hfill\\
\emph{Second construction}
As usual, the map $f$ is determined by the line bundle $\mathcal{L}=f^*\mathcal{O}(1)$ on $X$ and the $n+1$ global sections $s_i=f^* z_i$, $i=1,\ldots, n+1$ of $\mathcal{L}$ which generate $\mathcal{L}$. If the $s_i$ are not linearly independent, then the map factors through a hyperplane $\mathbb{P}^{n-1}_C \subseteq \mathbb{P}^n_C$; thus we may assume the $s_i$ are linearly independent. 

Now, let us assume $C$ has cardinality greater than $2^\mathbb{N}$. Then, for $\mathbb{L}$ the rank $n+1$ dimensional Lubin-Tate local system on $\mathbb{P}^{n,\mr{rig}}_C$, we are going to show the set of isomorphism classes of rank $n+2$ local systems on $\mathbb{P}^{n,\mr{rig}}_C$ arising as extensions of $\mathbb{Q}_p$ by $\mathbb{L}$ has cardinality at least that of $C$. We claim it suffices to show that $\Ext^1(\ul{\mathbb{Q}_p}, f^*\mathbb{L})$ has cardinality at least that of $C$. Indeed, in terms terms of representations of the de Jong fundamental group, the extension structures of a representation $V_2$ by a representation $V_1$ realized in a fixed representation $W$ are given by isomorphism classes of pairs consisting of an inclusion $V_1 \rightarrow W$ and a map $W \rightarrow V_2$ inducing $W/V_1 \xrightarrow{\sim} V_2$. Fixing bases, the cardinality of the collection of extensions structures on a fixed $W$ is thus bounded by that of $M_{\dim W \times \dim V_1}(\mathbb{Q}_p) \times M_{\dim V_2 \times \dim W}(\mathbb{Q}_p)$, which is the same as that of $\mathbb{Q}_p$, which is $2^\mathbb{N}$. Thus, when $C$ has cardinality greater than $2^\mathbb{N}$, the Ext group can only have cardinality at least that of $C$ if the set of isomorphism classes of the local systems arising as extensions also has cardinality at least that of $C$. 

To compute extensions, we note that we may work on the $v$-site $X_v$ (this is just a convenience; we could also work on $X_\proet$ by working with a basis of perfectoid objects). Pulling back \cref{eq.ses-defining}, $f^*\mathbb{L}$ sits in its defining short exact sequence on $X_v$,  
\[ 0 \rightarrow f^*\mathbb{L} \rightarrow \mathbb{B}^{\varphi^{n+1}=p} \xrightarrow{b \mapsto \sum_{i=1}^{n+1} \theta(\varphi^{i-1}(b)) \cdot s_i}f^*\mathcal{L} \rightarrow 0. \]
In particular, for any $(c_1, \ldots, c_{n+1}) \in C^{n+1}$, we may pull back this extension along the map
$\ul{\mathbb{Q}_p} \rightarrow f^*\mathcal{L}$ sending $1$ to $\sum c_i s_i$ to obtain an element of $\Ext^1(\ul{\mathbb{Q}_p}, \mathbb{L})$. The global sections of $\mathbb{B}^{\varphi^{n+1}=p}$ on $C$ over the connected rigid analytic variety $X$ are given by $B^{+,\varphi^{n+1}=p}_\crys$, where $B^{+}_{\crys}$ is the usual Fontaine period ring for $C$, thus this construction induces an embedding
\[ C^{n+1}/B^{+,\varphi^{n+1}=p}_\crys \hookrightarrow \Ext^1(\ul{\mathbb{Q}_p}, \mathbb{L}). \]
Here we have used that the sections $s_i$ are linearly independent. The domain of this embedding can also be presented as the quotient of $C^{n}$ by an $n+1$-dimensional $\mathbb{Q}_p$-vector subspace --- indeed, taking any codimension 1 subspace $C^n \subseteq C^{n+1}$, we have $B^{+,\varphi^{n+1}=p}_\crys + C^n=C^{n+1}$ (e.g. by taking cohomology of the associated modification sequence on the Fargues-Fontaine curve, or by realizing the map from $B^{+,\varphi^{n+1}=p}_\crys$ onto the quotient $C^{n+1}/C^n$ as the surjective logarithm map for a height $n+1$ one-dimensional formal group as in \cite{ScholzeWeinstein.ModuliOfpDivisibleGroups}), so that \[ C^{n+1}/B^{+,\varphi^{n+1}=p}_\crys = C^{n}/B^{+,\varphi^{n+1}=p}_\crys \cap C^n \]
where the intersection on the right is an $n+1$ dimensional $\mathbb{Q}_p$-vector space (e.g. by the taking cohomology of the same modification sequence or by realizing it as the rational Tate module of the same height $n+1$ one-dimensional formal group). In particular, this space has the same cardinality as $C$, so we conclude. 

\begin{remark}
It is known, e.g., for affinoid balls that the $\mathbb{F}_p$-cohomology is infinite dimensional in degree $1$ and depends on $C$. This can be used to show bigness of the fundamental group as in the second construction in the proof of \cref{theorem.generators}. However, for a projective variety, the $\mathbb{F}_p$-cohomology is finite dimensional (it is the same as the algebraic \'{e}tale cohomology) and the profinite \'{e}tale fundamental group is the same as the algebraic one thus stable under change of $C$. Thus one needs $\mathbb{Q}_p$-local systems to get a result for projective varieties (including abelian varieties).
\end{remark}

\end{proof}

\bibliographystyle{plain}
\bibliography{references, preprints}

\end{document}